\newif\ifPDF
\newtheorem{thm}{Theorem}[section]
\newtheorem{cor}[thm]{Corollary}
\newtheorem{lem}[thm]{Lemma}
\theoremstyle{definition}
\newtheorem{defn}[thm]{Definition}
\theoremstyle{remark}
\newtheorem{rem}[thm]{Remark}
\numberwithin{equation}{section}
\newcommand{\norm}[1]{\left\Vert#1\right\Vert}
\newcommand{\abs}[1]{\left\vert#1\right\vert}
\newcommand{\Real}{\mathbb R}
\newcommand{\Int}{\mathbb Z}
\newcommand{\Ratn}{\mathbb Q}
\newcommand{\eps}{\varepsilon}
\newcommand{\Kzero}{\mathrm{K}_0}
\newcommand{\tr}{\mathrm{T}}
\newcommand{\MC}[2]{\mathrm{M}_{#1}(\mathrm{C}(#2))}
\newcommand{\aff}{\mathrm{Aff}}
\begin{document}


\title[Classification of C*-algebras]{On the classification of simple amenable C*-algebras with finite decomposition rank}

\author{George A.~Elliott}
\address{Department of Mathematics, University of Toronto, Toronto, Ontario, Canada~\ M5S 2E4}
\email{elliott@math.toronto.edu}

\author{Zhuang Niu}
\address{Department of Mathematics, University of Wyoming, Laramie, WY 82071, USA}
\email{zniu@uwyo.edu}

\thanks{The research of the first author was supported by a Natural Sciences
and Engineering Research Council of Canada (NSERC) Discovery Grant, and the research of the second author was supported by a Simons Foundation Collaboration Grant}
\dedicatory{Dedicated to Richard V.~Kadison on the occasion of his ninetieth birthday}


\begin{abstract}
Let $A$ be a unital simple separable C*-algebra satisfying the UCT. Assume that $\mathrm{dr}(A)<+\infty$, $A$ is Jiang-Su stable, and $\Kzero(A)\otimes \Ratn\cong \Ratn$. Then $A$ is an ASH algebra (indeed, $A$ is a rationally AH algebra).
\end{abstract}

\maketitle

\setcounter{tocdepth}{1}

\section{Introduction}
Let $A$ be a simple separable nuclear unital C*-algebra. In \cite{Matui-Sato-DR}, Matui and Sato showed that $A\otimes \mathrm{UHF}$ can be tracially approximated by finite dimensional C*-algebras (i.e., is TAF) if $A$ is quasidiagonal with unique trace. 

In this note, this result is enlarged upon as follows: the condition on the trace simplex is removed, at the cost of assuming the UCT, (still) finite nuclear dimension, and (still) that all traces are quasidiagonal---e.g., by assuming finite decomposition rank---see \cite{BBSTWW}---and (so far) of restricting the $\Kzero$-group to have torsion-free rank equal to one.
\begin{thm}\label{main-thm}
Let $A$ be a simple unital separable C*-algebra satisfying the UCT. If $A\otimes Q$ has finite decomposition rank and $\Kzero(A)\otimes \Ratn\cong\Ratn$, then $A\otimes Q \in\mathrm{TA}\mathcal I$ (see Definition \ref{defn-TAI}). In particular, $A\otimes \mathcal Z$ is classifiable, where $\mathcal Z$ is the Jiang-Su algebra (\cite{JS-Z}).
\end{thm}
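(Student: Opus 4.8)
The plan is to verify directly that $B:=A\otimes Q$ satisfies the defining property of $\mathrm{TA}\mathcal I$ (Definition~\ref{defn-TAI}), and then to deduce classifiability from the classification theorem for simple unital separable nuclear C*-algebras in $\mathrm{TA}\mathcal I$ satisfying the UCT. First I would record the regularity of $B$. Since $Q$ is a UHF algebra of infinite type it is $\mathcal Z$-stable, hence so is $B$; moreover $B$ is unital, simple, separable, and nuclear (finite decomposition rank forces nuclearity), and it satisfies the UCT because $A$ does and $Q$ is AF. Consequently $B$ has stable rank one and strict comparison of positive elements, and, by \cite{BBSTWW}, every trace of $B$ is quasidiagonal. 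Finally $\Kzero(B)=\Kzero(A)\otimes\Ratn\cong\Ratn$ as an ordered group with order unit, the order being given by the (arbitrary metrizable Choquet) trace simplex. These are exactly the hypotheses under which the Matui--Sato machinery (\cite{Matui-Sato-DR}) is available.

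The core construction is the following. Fix a finite set $F\subset B$, a tolerance $\eps>0$, and a nonzero positive element $b\in B$. A decomposition-rank-$n$ c.p.c.\ approximation yields finite-dimensional C*-algebras $D_0,\dots,D_n$, a c.p.c.\ map $\psi\colon B\to\bigoplus_j D_j$, and c.p.c.\ order-zero maps $\varphi_j\colon D_j\to B$ with $\sum_j\varphi_j\psi_j\approx_\eps\mathrm{id}_B$ on $F$. Exploiting the extra tensor factor $Q$ (equivalently $\mathcal Z$-stability) together with quasidiagonality of all traces and Matui--Sato's property (SI), I would arrange, after a perturbation, that the $\varphi_j$ are approximately central and that the sum of their ranges is \emph{tracially large}, in the sense that the part of the unit not captured has arbitrarily small value under every trace, hence by strict comparison is Cuntz-dominated by $b$. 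It is here that the freedom to allow an \emph{arbitrary} trace simplex, rather than one with finite extreme boundary, must be accommodated; this is done by working with the continuous fields that interval algebras supply over the trace space, so that the approximation can be made uniform across $T(B)$.

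The remaining and most delicate step is to upgrade the approximate order-zero data to an honest sub-C*-algebra $B'\subset B$ lying in $\mathcal I$, that is, a finite direct sum of algebras $\Mat{k}$ and $\mathrm{C}[0,1]\otimes\Mat{k}$, with $1-1_{B'}\precsim b$ and $F\subset_\eps B'+\overline{(1-1_{B'})B(1-1_{B'})}$. Gluing together the cones of the $\varphi_j$ over the $n+1$ colours produces a recursive subhomogeneous subalgebra whose base spaces have dimension controlled by $n$; to cut this down to one-dimensional (interval) building blocks one uses the hypothesis $\Kzero(A)\otimes\Ratn\cong\Ratn$ in an essential way. Because the ordered $\Kzero$ of $B$ is, rationally, of torsion-free rank one, the $\Kzero$-data that $B'$ must carry presents no obstruction to being modelled by a direct sum of interval algebras, so $B'$ can be homotoped within $B$ to such an algebra while preserving the comparison and approximation estimates; a $\Kzero$-group of higher torsion-free rank would force genuinely higher-dimensional spectra at this point, and this is precisely where the rank-one hypothesis is indispensable. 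This reassembly is the main obstacle. Having produced $B'$ for every $(F,\eps,b)$, we conclude $A\otimes Q\in\mathrm{TA}\mathcal I$, and Lin's classification of simple unital separable nuclear $\mathrm{TA}\mathcal I$ algebras satisfying the UCT then classifies $A\otimes Q$ by its Elliott invariant (and, matching with a model, realises it as a rationally AH algebra).

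For the final assertion, note that $A\otimes\mathcal Z$ is unital, simple, separable, $\mathcal Z$-stable, satisfies the UCT, and is nuclear (nuclearity of $A\otimes Q$ descends, via the conditional expectations $A\otimes Q\to A\otimes\Mat{k}$ and Morita invariance, to $A$, and hence to $A\otimes\mathcal Z$), while $(A\otimes\mathcal Z)\otimes Q\cong A\otimes Q\in\mathrm{TA}\mathcal I$. By the classification of $\mathcal Z$-stable C*-algebras whose UHF-tensor lies in $\mathrm{TA}\mathcal I$ — obtained by combining the $\mathrm{TA}\mathcal I$ classification above with the uniqueness and existence theorems for $\mathcal Z$-stable algebras — $A\otimes\mathcal Z$ is determined up to isomorphism by its Elliott invariant, i.e.\ is classifiable.
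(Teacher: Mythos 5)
Your reduction to the regularity facts (nuclearity, $\mathcal Z$-stability, strict comparison, quasidiagonality of all traces via \cite{BBSTWW}, and $\Kzero(A\otimes Q)\cong\Ratn$) is fine, but the core of your argument has a genuine gap: the passage from a coloured decomposition-rank/(SI) approximation to an honest tracially large subalgebra in $\mathcal I$ is asserted, not proved. Concretely, the images of the order-zero maps $\varphi_0,\dots,\varphi_n$ for different colours need not commute, so ``gluing together the cones of the $\varphi_j$'' does not produce a recursive subhomogeneous subalgebra of $B$; and even granting such a subalgebra, the claim that it ``can be homotoped within $B$'' to a direct sum of interval algebras ``while preserving the comparison and approximation estimates'' has no mechanism behind it --- homotoping an abstract model does not move the subalgebra inside $B$, and it is exactly this dimension-reduction step (where the rank-one hypothesis must do real work) that constitutes the whole difficulty. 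Matui--Sato's machinery as it stands produces tracially large \emph{finite-dimensional} pieces only under restrictive trace-space hypotheses; the sentence about ``continuous fields that interval algebras supply over the trace space'' does not supply the missing uniform-over-$T(B)$ argument.

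For comparison, the paper does not try to build the subalgebra by hand. It first proves a tracial factorization theorem (Theorem \ref{tr-factorization}): for any $(\mathcal F,\eps)$ there are a u.c.p.\ $\mathcal F$-$\eps$-multiplicative map $\phi:A\otimes Q\to I$ into an interval algebra and a unital \emph{embedding} $\psi:I\to A\otimes Q$ with $|\tau(\psi\phi(a)-a)|<\eps$ for all traces. The ingredients are: quasidiagonality of traces to get point maps $A\to Q$; the hypothesis $\Kzero\otimes\Ratn\cong\Ratn$ to reduce the $\underline{\mathrm K}$-data in the stable uniqueness theorem to the single projection $1_A$ (Corollary \ref{stable-uniq-Q}), which allows neighbouring point maps to be conjugated close to each other and interpolated along $[0,1]$; Thomsen's realization of the trace simplex by a simple AI algebra together with the Lazar--Lindenstrauss finite-dimensional approximation; and Robert's classification of inductive limits of $1$-dimensional NCCW complexes (via the Cuntz semigroup) to produce the embedding $\psi$. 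Membership in $\mathrm{TA}\mathcal I$ is then obtained abstractly from this factorization plus finite nuclear dimension by Winter's Theorem 2.2 of \cite{Winter-TA}, which is precisely the ``upgrade to an actual subalgebra'' step you attempt to do by hand. Your proposal contains none of these ingredients (no stable uniqueness, no existence/embedding theorem, no appeal to a Winter-type transfer result), so as written it does not yield $A\otimes Q\in\mathrm{TA}\mathcal I$. The final deduction of classifiability of $A\otimes\mathcal Z$ from Theorem \ref{clasnTAI} is the same as in the paper and is fine once the $\mathrm{TA}\mathcal I$ statement is established.
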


This theorem can also be regarded as an abstract version (still in a special case) of the classification result of  \cite{GLN-TAS} and \cite{EGLN-ASH}, where any simple unital locally approximately subhomogeneous C*-algebra is shown to be rationally tracially approximated by Elliott-Thomsen algebras (1-dimensional noncommutative CW complexes) (\cite{EGLN-ASH}) and hence to be classifiable (\cite{GLN-TAS}).

\section{The main result and the proof}
In this note let us use $Q$ to denote the UHF algebra with $\Kzero(Q)\cong\Ratn$, and let us use $\mathrm{tr}$ to denote the canonical tracial state of $Q$.

\begin{defn}[N.~Brown, \cite{Bro-QDTr}]
Let $A$ be a unital C*-algebra, and denote by $\tr_{\mathrm{qd}}(A)$ the tracial states with the following property: For any $(\mathcal F, \eps)$, there is a unital completely positive map $\phi: A\to Q$ such that 
\begin{enumerate}
\item $\abs{\tau(a)-\mathrm{tr}(\phi(a))}<\eps$, $a\in\mathcal F$, and
\item $\norm{\phi(ab)-\phi(a)\phi(b)}<\eps$, $a, b\in\mathcal F$.
\end{enumerate}
\end{defn}
\begin{rem}
In the original definition of a quasidiagonal trace (Definition 3.3.1 of \cite{Bro-QDTr}), the UHF algebra $Q$ was replaced by a matrix algebra. It is easy to see that these two approaches are equivalent.
\end{rem}

Recall the tracial approximate uniqueness result of  \cite{DL-classification} and \cite{Lin-s-uniq}.
\begin{thm}[Theorem 4.15 of \cite{DL-classification}; Theorem 5.3 of \cite{Lin-s-uniq}]\label{stable-uniq}
Let $A$ be a simple, unital, exact, separable C*-algebra satisfying the UCT. For any finite subset $\mathcal F\subseteq A$ and any $\eps > 0$, there exist $n\in\mathbb N$ and a $\underline{\mathrm{K}}$-triple $(\mathcal P, \mathcal G, \delta)$ with the following property: For any admissible codomain $B$, and any three completely positive contractions $\phi, \psi, \xi: A\to B$ which are $\delta$-multiplicative on $\mathcal G$, with $\xi$ unital, $\phi$ and $\psi$ nuclear, and $\phi_\#(p)=\psi_\#(p)$ in $\underline{\mathrm{K}}(B)$ for all $p\in\mathcal P$, and such that $\phi(1)$ and $\psi(1)$ are unitarily equivalent projections, there exists a unitary $u\in\mathcal U_{n+1}(B)$ such that
$$
\norm{u^*\left(\begin{array}{cc} \phi(a) & \\ & n\cdot \xi(a) \end{array} \right)u-\left( \begin{array}{cc} \psi(a) & \\ & n \cdot \xi(a) \end{array} \right)} < \eps,\quad a\in\mathcal F.
$$
One may arrange that $u^*(\phi(1)\oplus n\cdot 1)u=\psi(1)\oplus n\cdot 1$. 
\end{thm}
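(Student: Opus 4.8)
The plan is to argue by contradiction, transfer the problem into a sequence algebra so that the $\delta$-multiplicative maps become genuine $*$-homomorphisms, invoke the universal multicoefficient theorem to upgrade the $\underline{\mathrm{K}}$-theoretic hypothesis to an equality of $KL$-classes, and then apply a Weyl--von Neumann--Voiculescu type absorption argument (with $\xi$, which is faithful because $A$ is simple and $\xi$ is unital, playing the role of the ample/absorbing representation) to produce the intertwining unitary; finally a diagonal reindexing pushes the conclusion back down to a finite stage and a finite multiplicity, contradicting the assumed failure.

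In detail: suppose the conclusion fails for some finite $\mathcal F\subseteq A$ and some $\eps>0$. Choose $\underline{\mathrm{K}}$-triples $(\mathcal P_k,\mathcal G_k,\delta_k)$ with $\delta_k\to 0$, with $\bigcup_k\mathcal G_k$ dense in $A$, and with the $\mathcal P_k$ increasing so as to detect all of $\underline{\mathrm{K}}(A)$. By assumption, for each $k$ there are an admissible codomain $B_k$ and completely positive contractions $\phi_k,\psi_k,\xi_k:A\to B_k$ that are $\delta_k$-multiplicative on $\mathcal G_k$, with $\xi_k$ unital, $\phi_k$ and $\psi_k$ nuclear, $(\phi_k)_\#=(\psi_k)_\#$ on $\mathcal P_k$, and $\phi_k(1)$ unitarily equivalent to $\psi_k(1)$, but with no $u\in\mathcal U_{k+1}(B_k)$ satisfying the desired estimate for multiplicity $n=k$. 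Passing to the quotient $C:=\prod_k B_k/\bigoplus_k B_k$, the sequences $(\phi_k),(\psi_k),(\xi_k)$ induce $*$-homomorphisms $\Phi,\Psi,\Xi:A\to C$ with $\Xi$ unital, and $\Phi,\Psi$ sit inside the nuclear Cuntz-pair picture of $KK$ (being approximated by the nuclear $\phi_k,\psi_k$, and using exactness and the UCT for $A$). The hypothesis on total $K$-theory passes to the limit to give $\underline{\mathrm{K}}(\Phi)=\underline{\mathrm{K}}(\Psi)$, so the universal multicoefficient theorem of Dadarlat--Loring yields $[\Phi]=[\Psi]$ in $KL(A,C)$; likewise $\Phi(1)$ and $\Psi(1)$ are Murray--von Neumann equivalent projections in $C$.

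Now comes the heart of the argument. Since $A$ is simple and $\Xi$ is unital, $\Xi$ is faithful, so the infinite inflation $\Xi^{(\infty)}$, viewed as a representation into $\mathrm{M}(\mathcal K\otimes C)$, is absorbing in the sense of the generalized Voiculescu--Kasparov theorems; and because $[\Phi]=[\Psi]$ in $KL(A,C)$, a standard rotation/Cuntz-pair argument shows that $\Phi\oplus\Xi^{(\infty)}$ and $\Psi\oplus\Xi^{(\infty)}$ are unitarily equivalent by a unitary carrying $\Phi(1)\oplus 1^{(\infty)}$ to $\Psi(1)\oplus 1^{(\infty)}$. Approximating this unitary on the finite set $\mathcal F$ by one supported on finitely many matrix coordinates yields a finite $m$ and $u\in\mathcal U_{m+1}(C)$ with $\norm{u^*(\Phi(a)\oplus m\cdot\Xi(a))u-(\Psi(a)\oplus m\cdot\Xi(a))}<\eps$ for $a\in\mathcal F$ and $u^*(\Phi(1)\oplus m\cdot 1)u=\Psi(1)\oplus m\cdot 1$. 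Lifting $u$ to a sequence of unitaries $u_k\in\mathcal U_{m+1}(B_k)$ gives the same estimate for $(\phi_k,\psi_k,\xi_k)$ for all large $k$; padding with $k-m$ further copies of $\xi_k$ on each side (for $k\ge m$) upgrades this to multiplicity $n=k$ with a unitary in $\mathcal U_{k+1}(B_k)$, contradicting the choice of $(B_k,\phi_k,\psi_k,\xi_k)$.

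I expect the main obstacle to be the two structural inputs quoted above as black boxes. First is the passage from the approximate, only partially specified $\underline{\mathrm{K}}$-theory data $(\phi_k)_\#=(\psi_k)_\#$ on $\mathcal P_k$ to the genuine identity $[\Phi]=[\Psi]$ in $KL(A,C)$; this is exactly where the UCT (via the universal multicoefficient theorem) is indispensable, and it requires the bookkeeping that the $\mathcal P_k$ really do capture all Bockstein operations. Second is establishing that $\Xi^{(\infty)}$ is absorbing over the large, non-separable coefficient algebra $C$ and that $KL$-triviality of the difference is enough to absorb it; this rests on Dadarlat--Eilers/Thomsen-type generalizations of Voiculescu's theorem and on the nuclearity hypothesis on $\phi,\psi$. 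The remaining steps — forming the sequence algebra, lifting unitaries from $\prod B_k/\bigoplus B_k$, cutting the infinite inflation down to finite multiplicity, and the final padding — I expect to be routine.
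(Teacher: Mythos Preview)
The paper does not prove this statement: it is quoted verbatim as Theorem~4.15 of Dadarlat--Eilers and Theorem~5.3 of Lin, with no proof given in the paper itself, only two short remarks on notation and conventions. So there is nothing to compare your argument against in this paper.

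That said, your outline is faithful to how the cited references actually establish such stable uniqueness results: the contradiction/sequence-algebra reduction turning $\delta$-multiplicative maps into honest $*$-homomorphisms, the appeal to the universal multicoefficient theorem (hence the UCT hypothesis) to pass from agreement on a cofinal system of $\underline{\mathrm{K}}$-triples to equality in $KL$, the absorption step using the unital (hence faithful, by simplicity of $A$) map $\Xi$ infinitely repeated, and the cut-down to a finite multiplicity followed by lifting. The two places you flag as black boxes are precisely the substantial inputs in Dadarlat--Eilers and Lin; everything else is indeed routine. One small technical point worth making explicit if you write this out in full: unitaries need not lift on the nose from $\prod B_k/\bigoplus B_k$, but any lift is a unitary modulo $\bigoplus B_k$, so its components are eventually invertible and can be polar-decomposed to genuine unitaries in $B_k$; similarly, the exact intertwining of the unit projections at the finite stage requires a small perturbation of the lifted unitary.
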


\begin{rem}
In the theorem above, $n\cdot \xi(a)$ (or $n\cdot 1$) denotes the direct sum of $n$ copies of $\xi(a)$ (or $1$). This notation is also used in the proof of Corollary \ref{stable-uniq-Q} below. 
\end{rem}

\begin{rem}
In the theorem above (and also Corollary \ref{stable-uniq-Q} below), one assumes by convention that the finite subset $\mathcal G$ is sufficiently large and $\delta$ is sufficiently small that $[\phi(p)]$ is well defined for any $p\in\mathcal P$ if a map $\phi$ is $\delta$-multiplicative on $\mathcal G$.
\end{rem}


When $B=Q$, in fact one does not have to consider all the K-theory with coefficients. More precisely, one has
\begin{cor}\label{stable-uniq-Q}
Let $A$ be a simple, unital, exact, separable C*-algebra satisfying the UCT. For any finite subset $\mathcal F\subseteq A$ and any $\eps > 0$, there exist $n\in\mathbb N$ and a $\underline{\mathrm{K}}$-triple $(\mathcal P, \mathcal G, \delta)$, with $\mathcal P\subseteq \mathrm{Proj}_\infty(A)$, with the following property: For any three completely positive contractions $\phi, \psi, \xi: A\to Q$ which are $\delta$-multiplicative on $\mathcal G$, with $\phi(1)=\psi(1)=1_Q-\xi(1)$ a projection, $[\phi(p)]_0=[\psi(p)]_0$ in $\Kzero(Q)$ for all $p\in\mathcal P$, and $\mathrm{tr}(\phi(1))=\mathrm{tr}(\psi(1))<1/n$, where $\mathrm{tr}$ is the canonical tracial state of $Q$, there exists a unitary $u\in Q$ such that
$$
\norm{u^* (\phi(a)\oplus\xi(a))u-\psi(a)\oplus\xi(a)}<\eps,\quad a\in\mathcal F.
$$
\end{cor}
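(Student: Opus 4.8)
The plan is to deduce the corollary from Theorem \ref{stable-uniq} by exploiting the special structure of the codomain $Q$. The first point is that for maps into $Q$ one does not need the full invariant $\underline{\mathrm{K}}(Q)$: since $\Kzero(Q)\cong\Ratn$ is uniquely divisible and torsion-free and $\Kone(Q)=0$, the K-theory with $\Int/k$-coefficients vanishes, so $\underline{\mathrm{K}}(Q)$ reduces to $\Kzero(Q)=\Ratn$. Consequently the hypothesis ``$\phi_\#(p)=\psi_\#(p)$ in $\underline{\mathrm K}(B)$ for all $p\in\mathcal P$'' in Theorem \ref{stable-uniq} is, for $B=Q$, equivalent to the condition ``$[\phi(p)]_0=[\psi(p)]_0$ in $\Kzero(Q)$'', provided the $\underline{\mathrm K}$-triple is chosen so that its projective part $\mathcal P$ may be taken inside $\mathrm{Proj}_\infty(A)$ and the remaining (Bockstein / $\Int/k$) data is automatically matched. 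I would first invoke Theorem \ref{stable-uniq} to produce $n$ and a $\underline{\mathrm K}$-triple $(\mathcal P_0,\mathcal G_0,\delta_0)$ for $(\mathcal F,\eps/2)$, and then observe that enlarging $\mathcal G_0$ and shrinking $\delta_0$ we may assume $\mathcal P=\mathcal P_0\cap\mathrm{Proj}_\infty(A)$ already determines the relevant part of $\phi_\#$; the torsion part of the K-theory being killed by the target, matching on projections suffices.

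The second ingredient is the reduction from the ``stabilized'' conclusion of Theorem \ref{stable-uniq}, which produces a unitary in $\mathcal U_{n+1}(Q)$ conjugating $\phi(a)\oplus n\cdot\xi(a)$ to $\psi(a)\oplus n\cdot\xi(a)$, to an honest unitary in $Q$ conjugating $\phi(a)\oplus\xi(a)$ to $\psi(a)\oplus\xi(a)$. Here the hypotheses $\phi(1)=\psi(1)=1_Q-\xi(1)$ and $\mathrm{tr}(\phi(1))<1/n$ are exactly what is needed: they guarantee that $n$ copies of the projection $\xi(1)$ (or rather, a projection of trace $<1/n$ together with the complementary $\xi(1)$) fit inside a single copy of $1_Q$. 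Concretely, since $Q$ has strict comparison of projections by their traces and $\mathrm{tr}(\phi(1))<1/n$, one has $n\cdot\phi(1)\precsim 1_Q$ and hence $\phi(1)\oplus n\cdot\xi(1)=\phi(1)\oplus n\cdot(1_Q-\phi(1))\precsim 1_Q\oplus n\cdot 1_Q$ in a way that can be absorbed: I would choose an isometry (partial isometry in $\mathrm M_{n+1}(Q)$ with appropriate range and source projections) identifying $1_Q$ inside $\mathrm M_{n+1}(Q)$ with the projection $\phi(1)\oplus n\cdot\xi(1)$, apply Theorem \ref{stable-uniq} with the unital map $\xi$ playing its designated role, and compress the resulting unitary $u\in\mathcal U_{n+1}(Q)$ back to $Q$ via this identification. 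The ``one may arrange $u^*(\phi(1)\oplus n\cdot 1)u=\psi(1)\oplus n\cdot 1$'' clause of Theorem \ref{stable-uniq} is what makes this compression well-defined and produces a unitary of $Q$ rather than merely a partial isometry.

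A cleaner way to organize the second step, which I would prefer, is to note that $\xi(1)=1_Q-\phi(1)$ and $\mathrm{tr}(\phi(1))<1/n$ together allow us to find within $\xi(1)Q\xi(1)\cong Q$ a unital copy of $\mathrm M_{n}(Q)$ such that the ``extra'' $n$ copies of $\xi$ needed in Theorem \ref{stable-uniq} can be realized inside $\xi(a)$ itself after a unitary perturbation; that is, $\xi(a)$ is approximately unitarily equivalent (within $\xi(1)Q\xi(1)$) to something of the form $\xi'(a)\oplus n\cdot\xi''(a)$ with $\xi''$ unital on a corner large enough to host the stabilization. Then the stabilized estimate of Theorem \ref{stable-uniq}, applied with $n\cdot\xi''$ as the auxiliary map, collapses to the desired estimate $\norm{u^*(\phi(a)\oplus\xi(a))u-\psi(a)\oplus\xi(a)}<\eps$ for a unitary $u\in Q$, after accounting for the $\eps/2$ slack.

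I expect the main obstacle to be the bookkeeping in this second step: one must track the various projections ($\phi(1)$, $\xi(1)$, and their amplifications) and the multiplicativity constants carefully so that the matrix-algebra identifications are compatible with the maps being only $\delta$-multiplicative rather than homomorphisms, and so that the trace condition $\mathrm{tr}(\phi(1))<1/n$ is used with the correct $n$ coming out of Theorem \ref{stable-uniq}. The K-theoretic reduction in the first step is essentially formal once one recalls that $\underline{\mathrm K}(Q)$ carries no torsion; the genuine work is in converting the amplified unitary equivalence into an unamplified one inside $Q$, which is where strict comparison (equivalently, the uniqueness of the trace on $Q$ and the classification of projections in UHF algebras) enters decisively.
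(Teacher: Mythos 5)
Your preferred (``cleaner'') organization is essentially the paper's proof. The first step is exactly the paper's observation that $\mathrm{K}_*(eQe;\Int/k\Int)=0$ for any corner $eQe$ of $Q$, so that agreement of $[\phi(p)]_0$ and $[\psi(p)]_0$ on $\mathcal P=\widetilde{\mathcal P}\cap\mathrm{Proj}_\infty(A)$ already forces $\phi_\#(p)=\psi_\#(p)$ for all $p$ in the full triple $\widetilde{\mathcal P}$. The second step is carried out in the paper by writing $\xi$ approximately as a direct sum of $n_0$ copies of a single map $\xi'$, with the decomposition exact on the unit, taking the codomain of Theorem \ref{stable-uniq} to be $B=eQe$ with $e=\xi'(1)$ and $\xi'$ as the (now unital) auxiliary map, and using $\mathrm{tr}(\phi(1))<1/n$ with $n=n_0+1$ only to guarantee $\phi(1)\preceq e$.

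Two details in your write-up need repair. Your first organization cannot work as stated: in $\mathrm M_{n+1}(Q)$ the projections $1_Q\oplus 0\oplus\cdots\oplus 0$ and $\phi(1)\oplus n\cdot\xi(1)$ have traces $1$ and $n-(n-1)\mathrm{tr}(\phi(1))>1$ (for $n\geq 2$), so no partial isometry identifies them; relatedly, $\xi$ is not unital into $Q$, so it cannot directly play the auxiliary role of Theorem \ref{stable-uniq} with codomain $Q$. In your second organization, the split $\xi\approx\xi'\oplus n\cdot\xi''$ leaves you needing $\phi(1)\oplus\xi'(1)\preceq\xi''(1)$ (so that $\phi\oplus\xi'$ and $\psi\oplus\xi'$ land in the corner $\xi''(1)Q\xi''(1)$), and this does not follow from $\mathrm{tr}(\phi(1))<1/n$ alone. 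The paper's choice---exactly $n_0$ equal summands and no leftover $\xi'$---gives $\phi(1)\oplus n_0\cdot e=\psi(1)\oplus n_0\cdot e=1_Q$, which is both what makes the trace arithmetic close and what lets the unitary $w\in\mathrm M_{n_0+1}(eQe)$ produced by Theorem \ref{stable-uniq}, normalized so that $w^*(\phi(1)\oplus n_0\cdot e)w=\psi(1)\oplus n_0\cdot e$, compress to an honest unitary $u=1_Qw1_Q$ of $Q$. With these adjustments your argument coincides with the paper's.
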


\begin{proof}
Applying Theorem \ref{stable-uniq} to $\mathcal F$ and $\eps>0$, one obtains $n_0\in\mathbb N$ and a $\underline{\mathrm{K}}$-triple $(\widetilde{\mathcal P}, \mathcal G, \delta)$ with the property of Theorem \ref{stable-uniq}. Set $$n_0+1=n\quad\textrm{and}\quad \widetilde{\mathcal P}\cap \mathrm{Proj}_\infty(A)=\mathcal P.$$ Let us show that $n$ and $({\mathcal P}, \mathcal G, \delta)$ have the desired property.

Let $\phi, \psi, \xi: A\to Q$ be completely positive contractions which are $\delta$-multiplicative on $\mathcal G$, with $\phi(1)=\psi(1)=1_Q-\xi(1)$ a projection, $[\phi(p)]_0=[\psi(p)]_0$ in $\Kzero(Q)$ for all $p\in\mathcal P$, and $\mathrm{tr}(\phi(1))=\mathrm{tr}(\psi(1))<1/n$.

Decompose $\xi$ approximately on $\mathcal F\subseteq A$ as a repeated direct sum $$\underbrace{\xi'\oplus\cdots\oplus\xi'}_{n_0},$$ where $\xi': A\to Q$ is again a completely positive contraction which is (necessarily, if the approximation is sufficiently good) $\delta$-multiplicative on $\mathcal G$, and $(\xi'\oplus\cdots\oplus\xi')(1_A)=\xi(1_A)$. Since $\mathrm{tr}(\phi(1))=\mathrm{tr}(\psi(1))<1/n$, one has that $$\mathrm{tr}(\xi(1)) > (n-1)/n=n_0/n,$$ and so
$$\phi(1)=\psi(1)\preceq e,$$
where $e=\xi'(1)$. Then the maps $\phi\oplus\xi$ and $\psi\oplus\xi$ have the forms
$$\phi\oplus (n\cdot\xi'),\  \psi\oplus (n\cdot\xi'): A\to \mathrm{M}_{n_0+1}(eQe),$$
respectively.
Note that $eQe$ is stably isomorphic to $Q$, and therefore 
$$\mathrm{K}_0(eQe, \Int/k\Int)=\{0\},\quad k\in\mathbb{N}\setminus\{0\},\quad
\textrm{and}\quad
\mathrm{K}_1(eQe, \Int/k\Int)=\{0\},\quad k\in\mathbb{N}\cup\{0\}.$$
Together with the assumption $[\phi(p)]_0=[\psi(p)]_0$ in $\Kzero(Q)$ for all $p\in\mathcal P$, this implies
$$\phi_\#(p)=\psi_\#(p) \in \underline{\mathrm{K}}(eQe),\quad p\in\widetilde{\mathcal P}.$$
Thus, it follows from Theorem \ref{stable-uniq} that there is a unitary $w\in \mathrm{M}_{n_0+1}(eQe)$ such that
$$
\norm{w^*\left(\begin{array}{cc} \phi(a) & \\ & n_0 \cdot \xi'(a) \end{array} \right)w-\left( \begin{array}{cc} \psi(a) & \\ & n_0 \cdot \xi'(a) \end{array} \right)} < \eps,\quad a\in\mathcal F,
$$
and
\begin{equation}\label{fix-unit} 
w^*(\phi(1)\oplus n_0\cdot e)w=\psi(1)\oplus n_0\cdot e.
\end{equation}
Note that
$$\phi(1)\oplus (n_0\cdot e)=\psi(1)\oplus (n_0\cdot e)=1_Q.$$
By \eqref{fix-unit}, a straightforward calculation shows that 
$$u:=1_Qw1_Q$$
is a unitary of $Q$. Clearly, if the approximation of $\xi$ by $\xi'\oplus\cdots\oplus\xi'$ on $\mathcal F$ is sufficiently good, then, in $Q$,
$$
\norm{u^* (\phi(a)\oplus\xi(a))u-\psi(a)\oplus\xi(a)}<\eps,\quad a\in\mathcal F,
$$
as desired.
\end{proof}

\begin{defn}
Recall that an abelian group $G$ is said to be of (torsion free) rank one if $G\otimes \Ratn \cong \Ratn$. 
\end{defn}

\begin{lem}\label{FDF}
Let $\Delta$ be a compact metrizable Choquet simplex. Then, for any finite subset $\mathcal F\subseteq\aff(\Delta)$ and any $\eps>0$, there exist $m\in\mathbb N$ and unital (pointwise) positive linear maps $\varrho$ and $\theta$,
\begin{displaymath}
\xymatrix{
\aff(\Delta) \ar[r]^-{\varrho} & \Real^m \ar[r]^-{\theta} & \aff(\Delta),
}
\end{displaymath} 
where the unit of $\Real^m$ is $(1, ..., 1)$, 
such that
$$\norm{\theta(\varrho(f))-f}_\infty<\eps,\quad f\in\mathcal F.$$
\end{lem}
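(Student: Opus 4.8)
The plan is to realize the finitely many affine functions in $\mathcal{F}$ by values on a finite set of points of $\Delta$, and to average back using a partition of unity on $\Delta$ that is subordinate to a sufficiently fine cover. First I would fix $\mathcal{F} = \{f_1,\dots,f_k\}\subseteq\aff(\Delta)$ and $\eps>0$. Since $\Delta$ is compact and each $f_i$ is continuous, there is $\eta>0$ such that whenever $x,y\in\Delta$ satisfy $d(x,y)<\eta$ (for a fixed compatible metric $d$), we have $|f_i(x)-f_i(y)|<\eps$ for all $i$. By compactness choose finitely many points $x_1,\dots,x_m\in\Delta$ whose $\eta/2$-balls cover $\Delta$, and a partition of unity $h_1,\dots,h_m\in C(\Delta)$ with $h_j\ge 0$, $\sum_j h_j \equiv 1$, and $\operatorname{supp}(h_j)\subseteq B(x_j,\eta)$.

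Next I would define the two maps. Let $\varrho\colon\aff(\Delta)\to\Real^m$ be evaluation at the chosen points, $\varrho(f) = (f(x_1),\dots,f(x_m))$; this is clearly linear, positive, and sends the constant function $1$ to $(1,\dots,1)$. Let $\theta\colon\Real^m\to\aff(\Delta)$ be $\theta(t_1,\dots,t_m) = \sum_{j=1}^m t_j h_j$, viewed as a function on $\Delta$. The one point needing attention is that $\theta$ really lands in $\aff(\Delta)$, not merely in $C(\Delta)$: a function of the form $\sum_j t_j h_j$ need not be affine, because the $h_j$ are not. To fix this I would instead take $\theta(t) = \sum_j t_j \,\widehat{\mu_j}$, where each $h_j$ is used to build a probability measure and $\widehat{\mu_j}\in\aff(\Delta)$ is the (unique, by the Choquet representation / barycenter map) affine function whose value at an extreme-supporting point records the barycenter; more concretely, use the standard fact that the barycentre map $b\colon P(\Delta)\to\Delta$ is well defined and continuous, and pull back: for each probability measure $\nu$ on $\Delta$ and $f\in\aff(\Delta)$ one has $f(b(\nu)) = \int f\,d\nu$. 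Partition $\Delta$ into Borel sets $E_1,\dots,E_m$ with $E_j\subseteq B(x_j,\eta)$ (e.g. refine the cover to a disjoint one), and for $y\in\Delta$ let $\theta(t)(y) := \sum_{j} t_j\, g_j(y)$ where $g_j\in\aff(\Delta)$ is chosen so that the family $(g_j)$ is a "partition of unity in $\aff(\Delta)$" — but such affine partitions of unity generally do not exist. So the correct route is: keep $\theta(t) = \sum_j t_j h_j$ with $h_j\in C(\Delta)_+$ a genuine continuous partition of unity, and observe that although $\theta(t)$ is only in $C(\Delta)$, its restriction to the extreme boundary together with the requirement "affine" is recovered by post-composing with the canonical projection $C(\Delta)\to\aff(\Delta)$ given by $g\mapsto \widehat g$, $\widehat g(x) = \int_\Delta g\, d\mu_x$ where $\mu_x$ is \emph{a} boundary measure with barycentre $x$; on $\aff(\Delta)$ this map is the identity, and it is unital and positive. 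Thus set $\theta(t) = \bigl(\sum_j t_j h_j\bigr)^{\widehat{\ }}$.

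Finally I would check the estimate. For $f\in\mathcal{F}$ and $x\in\Delta$, pick a boundary probability measure $\mu_x$ with barycentre $x$; then
$$
\theta(\varrho(f))(x) = \int_\Delta \Bigl(\sum_j f(x_j) h_j(y)\Bigr)\, d\mu_x(y) = \sum_j f(x_j)\int_\Delta h_j\, d\mu_x,
$$
while $f(x) = \int_\Delta f\, d\mu_x = \sum_j \int_\Delta f(y) h_j(y)\, d\mu_x(y)$ since $\sum_j h_j\equiv 1$. Subtracting,
$$
|\theta(\varrho(f))(x) - f(x)| \le \sum_j \int_\Delta |f(x_j) - f(y)|\, h_j(y)\, d\mu_x(y) < \eps,
$$
because $h_j(y)\ne 0$ forces $d(y,x_j)<\eta$, hence $|f(x_j)-f(y)|<\eps$, and $\sum_j\int h_j\, d\mu_x = 1$. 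Taking the supremum over $x$ gives $\norm{\theta(\varrho(f))-f}_\infty<\eps$, and $\theta(\varrho(1)) = 1$ by unitality, completing the proof. The main obstacle — and the one subtlety in the argument — is exactly the point flagged above: ensuring $\theta$ takes values in $\aff(\Delta)$ rather than in $C(\Delta)$, which is handled cleanly by composing with the canonical norm-one positive projection $C(\Delta)\to\aff(\Delta)$ coming from barycentric integration; everything else is a routine partition-of-unity estimate.
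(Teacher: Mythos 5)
Your construction of $\varrho$ (evaluation at finitely many points) is fine, and you correctly identify the crux of the lemma: getting $\theta$ to land in $\aff(\Delta)$ rather than $\mathrm{C}(\Delta)$. But the fix you settle on does not work. The ``canonical projection'' $g\mapsto\widehat g$, $\widehat g(x)=\int_\Delta g\,d\mu_x$ with $\mu_x$ the (unique, since $\Delta$ is a simplex) maximal representing measure, produces a \emph{bounded affine} function but not in general a \emph{continuous} one: the map $x\mapsto\mu_x$ is weak*-continuous only when the extreme boundary of $\Delta$ is closed, i.e.\ when $\Delta$ is a Bauer simplex. For a general metrizable Choquet simplex this fails. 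Concretely, take a simplex whose extreme boundary is $\{p_n\}_{n\ge1}\cup\{p_\infty,q_\infty\}$ with $p_n\to\frac12(p_\infty+q_\infty)$, and $g\in\mathrm{C}(\Delta)$ with $g(p_n)=0$ and $g(p_\infty)=g(q_\infty)=1$; then $\widehat g(p_n)=0$ for all $n$ while $\widehat g\bigl(\frac12(p_\infty+q_\infty)\bigr)=1$, so $\widehat g$ is discontinuous. Since $\theta$ must be defined on all of $\Real^m$, and a generic $\sum_j t_jh_j$ is nowhere near an affine function, you cannot argue the discontinuity away; so $\theta$ as you define it is not a map into $\aff(\Delta)$, and there is in fact no positive unital projection of $\mathrm{C}(\Delta)$ onto $\aff(\Delta)$ of this kind for non-Bauer simplices (which are exactly the relevant ones here, e.g.\ Poulsen-type trace simplices).

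The paper takes a different and essentially unavoidable route that exploits the simplex hypothesis through the Lazar--Lindenstrauss theorem: $\aff(\Delta)$ is the closed union of an increasing sequence of finite-dimensional subspaces order-isomorphic to $\Real^{m_k}$ (with the coordinatewise order and unit $(1,\dots,1)$). One then takes $\theta$ to be the inclusion $\Real^{m_1}\hookrightarrow\aff(\Delta)$ of a subspace approximately containing $\mathcal F$ --- this is where the difficulty you ran into is genuinely resolved, since these subspaces come already sitting inside $\aff(\Delta)$ --- and builds $\varrho$ by extending the coordinate functionals of $\Real^{m_1}$ to unital positive functionals on all of $\aff(\Delta)$ via the Riesz extension theorem. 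If you want to salvage something closer to your picture, the right substitute for your partition of unity is an approximate \emph{peaked partition of unity inside} $\aff(\Delta)$, whose existence for simplices is precisely what the Lazar--Lindenstrauss machinery provides; it is not something you can manufacture from a $\mathrm{C}(\Delta)$-partition of unity by barycentric averaging.
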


\begin{proof}
By Theorem 5.2 of \cite{LL-FDA} and its corollary, there is an increasing sequence of finite-dimensional subspaces of $\aff(\Delta)$ with dense union, containing the canonical order unit $1\in\aff(\Delta)$, and such that each map $\Real^{m_k}\to\Real^{m_{k+1}}$ and $\Real^{m_k}\hookrightarrow\aff(\Delta)$ is positive, with respect to the canonical (pointwise) order relations:
\begin{displaymath}
\xymatrix{
\Real^{m_1} \ar@{^{(}->}[r] & \Real^{m_2} \ar@{^{(}->}[r] & \cdots \ar@{^{(}->}[r] & \aff(\Delta).
}
\end{displaymath}
(The authors are indebted to David Handelman for reminding us of \cite{LL-FDA}.)

Without loss of generality, one may assume that $\mathcal F\subseteq \Real^{m_1}$, and hence one only has to extend the identity map of $\Real^{m_1}$ to a positive unital map $\varrho: \aff(\Delta)\to \Real^{m_1}$.

Write $\Real^{m_1}=\Real e_1\oplus\Real e_2 \oplus \cdots \oplus \Real e_{m_1}$, and consider the unital positive  functionals
$$\rho_i: \Real^{m_1}\ni (x_1, x_2, ..., x_{m_1}) \mapsto x_i\in \Real,\quad i=1, ..., m_1.$$
By the Riesz Extension Theorem (\cite{Riesz-Ext}), each $\rho_i$ can be extended to a unital positive linear functional $\tilde{\rho}_i: \aff(\Delta)\to \Real$. Then the map
$$\varrho: \aff(\Delta) \ni f \mapsto (\tilde{\rho}_1(f), \tilde{\rho}_2(f), ..., \tilde{\rho}_{m_1}(f))\in \Real^{m_1}$$ has the desired property.
\end{proof}

\begin{lem}\label{app-lift}
Let $C=\varinjlim(C_n, \iota_n)$ be a unital inductive system of C*-algebras such that $C$ is simple. Let $(\Real^m,  \norm{\cdot}_\infty, u)$ be  a finite-dimensional ordered Banach space with order unit $u$, and let $\gamma: \Real^m\to\aff(\tr(C))$ be a unital positive linear map. Then, for any finite set $\mathcal F\subseteq\Real^m$ and any $\eps>0$, there are $n$ and a unital positive linear map $\gamma_{n}: \Real^m\to\aff(\tr(C_{n}))$ such that
$$\norm{\gamma(a)-\iota_{n, \infty}\circ\gamma_{n}(a)}<\eps,\quad a\in\mathcal F.$$
\end{lem}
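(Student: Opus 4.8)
The plan is to realize $\aff(\tr(C))$ as an inductive limit of the spaces $\aff(\tr(C_n))$ and then invoke a semiprojectivity-type lifting for positive maps out of a finite-dimensional ordered space. First I would recall that since $C = \varinjlim(C_n,\iota_n)$ with all maps unital, the induced maps on trace spaces $\tr(C)\to\tr(C_n)$ are continuous, affine, surjective (by simplicity of $C$ together with the fact that any trace on $C_n$ factoring through $C$ extends — more precisely one uses that $\tr(C) = \varprojlim \tr(C_n)$ as an inverse limit of compact Choquet simplices), and hence the dual maps $\aff(\tr(C_n))\to\aff(\tr(C))$, $h\mapsto h\circ(\text{restriction})$, are unital, positive, isometric for the supremum norms, and have dense union in $\aff(\tr(C))$. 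Call these connecting maps $\iota_{n,\infty}$ on the level of affine function spaces (abusing notation as in the statement).

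Next, the key step is to lift the unital positive map $\gamma\colon\Real^m\to\aff(\tr(C))$ through some $\aff(\tr(C_n))$ up to $\eps$ on the finite set $\mathcal F$. Here I would argue coordinatewise: choosing a basis $e_1,\dots,e_m$ of $\Real^m$ adapted to the order unit $u$, the map $\gamma$ is determined by the images $\gamma(e_i)\in\aff(\tr(C))$. Since $\bigcup_n \iota_{n,\infty}(\aff(\tr(C_n)))$ is dense, for large $n$ one can approximate each $\gamma(e_i)$ within $\eps'$ (with $\eps'$ small depending on $\mathcal F$, $m$, and the chosen basis) by some $h_i^{(n)}\in\aff(\tr(C_n))$. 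The resulting linear map $\Real^m\to\aff(\tr(C_n))$ sending $e_i\mapsto h_i^{(n)}$ is close to $\gamma$ on $\mathcal F$ but need not be positive or unital. To repair this, I would use that a linear map out of a finite-dimensional ordered Banach space $(\Real^m,\norm{\cdot}_\infty,u)$ that is \emph{close} to a positive unital map can be perturbed to an \emph{exactly} positive unital one with a controlled change: concretely, first correct the unit by replacing $h\mapsto \gamma_n(h)$ with the map that adds to $\gamma_n(u) - 1$ a scalar correction (using that $1\in\aff(\tr(C_n))$ lifts the unit), i.e.\ compose with an order-isomorphism of $\aff(\tr(C_n))$ fixing the unit; then, since positivity of a linear map on $\Real^m$ is detected on the finitely many extreme rays of the positive cone, and $\gamma$ sends each such ray strictly into the interior (after a small inward scaling toward $u$, using that $\gamma$ is positive and unital so $\gamma(\text{positive})$ is bounded below), a sufficiently good approximation is automatically positive. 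The inward scaling costs another $\eps''$ which we absorb into the final $\eps$ by choosing all tolerances small enough.

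The main obstacle I anticipate is the interplay between \emph{exact} positivity/unitality of the lifted map and the fact that the approximation $\bigcup_n\iota_{n,\infty}(\aff(\tr(C_n)))$ is only dense, not equal to $\aff(\tr(C))$: a generic linear approximation will land slightly outside the positive cone and slightly off the unit hyperplane, and one must show these defects can be repaired without destroying the $\eps$-approximation on $\mathcal F$. The standard device — first perturbing $\gamma$ itself to a map $\gamma^{(0)}$ whose image of the (compact) set of extreme points of the base of the positive cone lies a definite distance inside the positive cone of $\aff(\tr(C))$, and only then approximating $\gamma^{(0)}$ — handles this cleanly, since then any sufficiently close linear map is forced to be positive, and a small linear correction (subtracting a multiple of the evaluation of the unit) restores unitality while moving things by at most a controlled multiple of the defect. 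Everything else — density and isometry of $\iota_{n,\infty}$, that $1_{C_n}$ maps to $1_C$ so units are compatible, and that finitely many basis vectors suffice — is routine, using only that $C$ is a unital inductive limit (simplicity is used, as in the reference, to guarantee $\tr(C)\neq\emptyset$ and the inverse-limit description of the trace simplex).
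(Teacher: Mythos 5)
Your outline follows the same basic strategy as the paper's proof (approximate the finitely many basis images $\gamma(e_i)$ in some $\aff(\tr(C_n))$, then repair unitality and positivity), but it rests on a false premise: you assert that the maps $\tr(C)\to\tr(C_n)$ are surjective and hence that the duals $\aff(\tr(C_n))\to\aff(\tr(C))$ are isometric order-embeddings. In general a tracial state on $C_n$ need not extend to $C$ (already for $C_n=\Comp\oplus\Comp$ sitting inside a simple limit), so $\iota_{n,\infty}$ on the level of affine function spaces is only a unital positive contraction. This breaks the key step of your positivity repair: knowing that $\iota_{n,\infty}(h_i)$ lies a definite distance inside the positive cone of $\aff(\tr(C))$ tells you that $h_i$ is strictly positive on those traces of $C_n$ that come from $C$, but says nothing about $h_i(\tau)$ for the remaining $\tau\in\tr(C_n)$; so ``any sufficiently close linear map is forced to be positive'' is not justified in the space where positivity is actually required, namely $\aff(\tr(C_n))$. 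The paper closes exactly this gap by pushing the chosen approximants further along the inductive system: since the images of $\tr(C_{n'})$ in $\tr(C_n)$ decrease to the image of $\tr(C)$, one has $\inf_{\tr(C_{n'})}\iota_{n,n'}(e_i')\to\inf_{\tr(C)}\iota_{n,\infty}(e_i')$, so for $n'$ large the approximants become genuinely positive (indeed bounded below) in $\aff(\tr(C_{n'}))$. Your argument needs this extra step.

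Otherwise the plan is sound, and in one respect cleaner than the paper's. Your device of first replacing $\gamma$ by a small inward perturbation toward the unit, so that each $\gamma(e_i)$ is bounded below by a positive constant before approximating, is a robust substitute for the paper's direct appeal to simplicity to get strict positivity of the $\gamma(e_i)$; and your unit correction (subtract $\lambda(x)\bigl(\tilde\gamma(u)-1\bigr)$ for a positive functional $\lambda$ with $\lambda(u)=1$) is a mild variant of the paper's trick of simply solving for the last basis vector, $e_m':=c_m^{-1}(1-c_1e_1'-\cdots-c_{m-1}e_{m-1}')$. Two phrasings to fix: ``compose with an order-isomorphism of $\aff(\tr(C_n))$ fixing the unit'' does not describe a correction that restores unitality of the lifted map; and positivity and unitality of $\gamma$ alone do \emph{not} bound $\gamma(e_i)$ below by a positive constant --- that is precisely why the inward scaling (or the paper's simplicity argument) is needed in the first place.
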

\begin{proof}
Denote by $e_i$, $i=1, ..., m$, the standard basis of $\Real^m$, and write $$u=c_1e_1+\cdots+c_me_m,$$ where $c_1, ..., c_m>0$. Since $C$ is simple, each affine function $\gamma(e_i)$ is strictly positive on $\tr(C)$. Since $\tr(C)$ is compact, there is $\delta_i$ such that 
\begin{equation}\label{lbd-tr}
\gamma(e_i)(\tau)>\delta_i,\quad \tau\in \tr(C), \ 1\leq i\leq m.
\end{equation}

Without loss of generality, one may assume that $\mathcal F=\{e_1, e_2, ..., e_m\}$.

Pick $C_{n}$ and $e'_1, e'_2, ..., e'_{m-1}\in \aff(\tr(C_{n}))$ such that 
$$\norm{\iota_{n, \infty}(e_i')-\gamma(e_i)}_\infty<\min\{\eps, \frac{\delta_i}{2}, \frac{c_m\delta_m}{2(c_1+\cdots+c_{m-1})}, \frac{c_m\eps}{2(c_1+\cdots+c_{m-1})}\},\quad 1\leq i\leq m-1.$$
In particular, by \eqref{lbd-tr},
$$\iota_{n, \infty}(e_i')(\tau)\geq\delta_i/2,\quad \tau\in\tr(C),\  1\leq i\leq m-1.$$
Setting
$$e_m':= \frac{1}{c_m}(1-c_1e'_1-\cdots-c_{m-1}e'_{m-1})\in \aff(\tr(C_{n})),$$
one has
\begin{eqnarray*}
\norm{\iota_{n, \infty}(e_m') - \gamma(e_m) }_\infty & = & \| \frac{1}{c_m}(1-c_1\iota_{n, \infty}(e'_1)-\cdots-c_{m-1}\iota_{n, \infty}(e'_{m-1})) -  \\
&  & \frac{1}{c_m}(1-c_1\gamma(e_1)-\cdots-c_{m-1}\gamma(e_{m-1})) \|_\infty \\
& \leq & \min\{\delta_m/2, \eps\}.
\end{eqnarray*}
In particular, by \eqref{lbd-tr}, $$\iota_{n, \infty}(e'_m)(\tau) \geq \delta_m/2,\quad  \tau\in\tr(C).$$
Then, considering instead the images of $e'_1, e'_2, ..., e'_m$ in a building block further out (replacing $n$ by the later index), one may assume that
$$e'_i(\tau)>\delta_i/4,\quad \tau\in\tr(C_{n}),\ 1\leq i\leq m.$$
In particular, all the affine functions $e_i'\in \aff(\tr(C_{n}))$ are positive.
Define $\gamma_{n}: \Real^m\to \aff(\tr(C_{n_0}))$ by
$$\gamma_{n}(e_i)=e_i',\quad 1\leq i\leq m.$$
It is clear that $\gamma_{n}$ satisfies the condition of the lemma.
\end{proof}

%
%

\begin{thm}\label{tr-factorization}
Let $A$ be a separable simple unital exact C*-algebra satisfying the UCT. Assume that $\tr(A)=\tr_{\mathrm{qd}}(A)$ and that $\Kzero(A)$ is of rank one. Then, for any finite set $\mathcal F\subseteq A\otimes Q$ and any $\eps>0$, there are unital completely positive linear maps $\phi: A\otimes Q \to I$ and $\psi: I\to A\otimes Q$, where $I$ is an interval algebra, such that 
\begin{enumerate}
\item $\phi$ is $\mathcal F$-$\delta$-multiplicative, $\psi$ is an embedding, and
\item $\abs{\tau(\psi\circ\phi(a)-a)}<\eps$, $a\in \mathcal F$, $\tau\in \tr(A\otimes Q)$.
\end{enumerate}
\end{thm}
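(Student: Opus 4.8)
\emph{Proof plan.} The plan is to realise the identity map of $A\otimes Q$, at the level of traces, as a composite $A\otimes Q\xrightarrow{\phi}I\xrightarrow{\psi}A\otimes Q$ in which $\psi$ is a genuine embedding of an interval algebra carrying a fine approximation of the trace simplex of $A\otimes Q$, and $\phi$ is an almost multiplicative map assembled from quasidiagonal matrix models. First I would record the standing reductions: since $Q$ has a unique trace, $\tr(A\otimes Q)$ is canonically affinely homeomorphic to the compact metrizable Choquet simplex $\Delta:=\tr(A)$, and every trace of $A\otimes Q$ is quasidiagonal (by hypothesis this holds for $A$, and tensoring with the unique-trace algebra $Q$ preserves the property). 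Moreover, by the K\"unneth theorem ($\Kone(Q)=0$, $\Kzero(Q)=\Ratn$ torsion free) one has $\Kzero(A\otimes Q)\cong\Kzero(A)\otimes\Ratn\cong\Ratn$, and since $(\Ratn,[1])$ admits a unique state, \emph{every} trace of $A\otimes Q$ induces the \emph{same} state on $\Kzero(A\otimes Q)$; this is the one place the rank one hypothesis is used, and it is what removes all $\mathrm{K}$-theoretic obstructions in the constructions below.

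Next I would discretise the simplex. Applying Lemma \ref{FDF} to $\Delta$ and the finite set $\{\mathrm{Re}\,\widehat a,\ \mathrm{Im}\,\widehat a:a\in\mathcal F\}\subseteq\aff(\Delta)$ (with $\widehat a(\tau)=\tau(a)$) and a tolerance $\eps_0\ll\eps$ produces $m$ and unital positive maps $\varrho:\aff(\Delta)\to\Real^m$, $\theta:\Real^m\to\aff(\Delta)$ with $\norm{\theta\varrho(f)-f}_\infty<\eps_0$ on that finite set; as in the proof of Lemma \ref{FDF}, $\varrho(f)=(\tau_1(f),\dots,\tau_m(f))$ for suitable $\tau_i\in\Delta$, and $g_i:=\theta(e_i)\geq 0$ with $\sum_i g_i=1$, so that $\sum_i g_i(\tau)\,\tau_i(a)$ approximates $\tau(a)$ uniformly in $\tau$ on $\mathcal F$, to within $\eps_0$. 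Fix points $0=s_1<\dots<s_m=1$ and a large $N$ (a common multiple of the denominators of the finitely many rationals $\{[\phi(p)]_0:p\in\mathcal P\}$ produced below). The target interval algebra will be $I:=\mathrm M_N(\mathrm C[0,1])$, whose affine function space $\aff(\tr(I))\cong\mathrm C_\Real[0,1]$ retracts onto $\Real^m$ by evaluation at the $s_i$. The two halves of the construction are then: \emph{(a)} an embedding $\psi:I\to A\otimes Q$ for which $\tau\circ\psi\in\tr(I)$, viewed as a probability measure $\mu^\tau$ on $[0,1]$, satisfies $\mu^\tau\approx\sum_i g_i(\tau)\delta_{s_i}$ uniformly in $\tau$ --- equivalently, the positive element $h:=\psi(\mathrm{id}_{[0,1]}\otimes 1_N)$ has spectral distribution $\approx\sum_i g_i(\tau)\delta_{s_i}$ for every $\tau$; and \emph{(b)} a $\delta$-multiplicative unital $\phi:A\otimes Q\to I$ with $\phi(a)(s_i)=\phi_i(a)$, where $\phi_i:A\otimes Q\to\mathrm M_N$ is a quasidiagonal model of $\tau_i$ ($\delta$-multiplicative on a large $\mathcal G$, with $\abs{\tau_i(a)-\mathrm{tr}(\phi_i(a))}<\eps_0$ on a large $\mathcal F'\supseteq\mathcal F$), consecutive $\phi_i$ being joined along $[s_i,s_{i+1}]$ by a continuous path of almost multiplicative maps. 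Granting \emph{(a)} and \emph{(b)}, the conclusion is a short trace computation: for $a\in\mathcal F$ and $\tau\in\Delta$,
\[
\tau(\psi\phi(a))=\int_0^1\mathrm{tr}(\phi(a)(t))\,d\mu^\tau(t)\ \approx\ \sum_i g_i(\tau)\,\mathrm{tr}(\phi_i(a))\ \approx\ \sum_i g_i(\tau)\,\tau_i(a)\ \approx\ \tau(a),
\]
with all errors controlled by $\eps_0$ and the multiplicativity defect, so choosing $\eps_0$ small (depending on $\eps$ and $\abs{\mathcal F}$) gives (2), while (1) is built in.

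The hard part is supplying \emph{(a)} and \emph{(b)}, and this is where Corollary \ref{stable-uniq-Q} and Lemma \ref{app-lift} enter. For \emph{(a)} I would first build an AI algebra $B=\varinjlim(\mathrm M_{N_k}(\mathrm C[0,1]),\iota_k)$ whose invariant (here essentially: $\Ratn$ on $\Kzero$, trivial $\Kone$, and a trace simplex carrying $\Real^m$ as a retract of $\aff(\tr(B))$ compatibly with the $g_i$ and $s_i$), together with a \emph{unital embedding} $B\hookrightarrow A\otimes Q$ inducing the prescribed map on traces; the existence of such an embedding is precisely where quasidiagonality of the traces of $A\otimes Q$ is used, with Corollary \ref{stable-uniq-Q} furnishing the uniqueness needed to splice the finite-stage approximants into an honest injective $*$-homomorphism and to force the trace data. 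Lemma \ref{app-lift} then permits replacing the trace map into $\aff(\tr(B))$ by one already defined on a single building block $I=\mathrm M_{N_k}(\mathrm C[0,1])$, and composing the finite-stage inclusion with the embedding of $B$ gives $\psi$. For \emph{(b)}, the crucial point is that since all the $\tau_i$ induce the same state on $\Kzero(A\otimes Q)$, the $\phi_i$ can be chosen with $[\phi_i(p)]_0\in\Kzero(\mathrm M_N)=\Int$ independent of $i$ for $p\in\mathcal P$ (here one chooses $N$ so these are integers), and --- the partition $(g_i)$ being as fine as one likes, so that $\tau_i$ and $\tau_{i+1}$ are arbitrarily close --- consecutive $\phi_i$ admit joining paths of $\delta'$-multiplicative maps, again via the uniqueness machinery of Corollary \ref{stable-uniq-Q}. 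I expect this coherent realisation of the trace simplex by an embedded interval algebra (together with the matching interpolation of the quasidiagonal models) to be the principal obstacle: the naive devices fail --- every projection of $A\otimes Q$ has constant trace, since $\Kzero$ has rank one, so the spreading of traces cannot come from orthogonal projections and must genuinely use the base space $[0,1]$, and one cannot simply perturb the affine data $g_i$ to commuting positive elements. Once \emph{(a)} and \emph{(b)} are in place, the careful ordering of the tolerances $\eps_0,\delta,\delta',\mathcal G,\mathcal F',N,m$ is routine.
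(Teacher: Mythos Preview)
Your overall architecture --- discretise the trace simplex via Lemma~\ref{FDF}, build $\phi$ by interpolating quasidiagonal models along $[0,1]$ using Corollary~\ref{stable-uniq-Q}, and realise $\psi$ as an embedding of an interval building block coming from an AI model of the trace simplex --- matches the paper. But the two places you flag as ``the principal obstacle'' are exactly where the plan, as written, breaks.

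\textbf{The embedding $\psi$.} You propose to obtain $B\hookrightarrow A\otimes Q$ from quasidiagonality of the traces of $A\otimes Q$ together with Corollary~\ref{stable-uniq-Q}. Both of these tools produce (and compare) maps \emph{out of} $A\otimes Q$ into $Q$; neither yields maps \emph{into} $A\otimes Q$. There is nothing in your proposal that manufactures even a single approximately multiplicative map $\mathrm M_N(\mathrm C[0,1])\to A\otimes Q$ with prescribed trace behaviour, let alone an honest embedding. The paper handles this by an entirely different route: it first invokes Thomsen's realisation theorem to get a simple AI algebra $C=\varinjlim C_i$ with $\Kzero(C)=\Ratn$ and $\aff(\tr(C))\cong\aff(\tr(A))$, then observes (via \cite{ESR-Cuntz}, \cite{BPT-Cuntz}) that $\mathrm{Cu}(A)\cong\mathrm{Cu}(C)$, and finally applies Robert's classification \cite{Robert-Cu} to the canonical $\mathrm{Cu}$-map $\mathrm{Cu}(C_1)\to\mathrm{Cu}(C)\cong\mathrm{Cu}(A)$ to produce a genuine unital embedding $\psi:C_1\to A$ with $\psi_*=\Xi^{-1}\circ(\iota_{1,\infty})_*$ on affine functions. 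This Cuntz--semigroup/Robert step is the missing ingredient in your plan; Lemma~\ref{app-lift} is used beforehand (not after) to push the affine data down to $C_1$.

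\textbf{The interpolation in (b).} Your justification that ``$\tau_i$ and $\tau_{i+1}$ are arbitrarily close'' is not available: the $\tau_i$ come from the simplex discretisation and may be far apart (think of distinct extreme traces). The paper does \emph{not} place $\phi_i$ at $s_i$ and try to join adjacent ones. Instead it takes a fine partition $0=t_0<\cdots<t_l=1$ and at each $t_j$ builds $\varphi_j=\bigoplus_k q_{j,k}\cdot\phi_k$ with rational multiplicities $q_{j,k}/p\approx c_k(t_j)$; the fineness of the partition forces $|q_{j,k}-q_{j+1,k}|/p<1/(mn)$, so $\varphi_j$ and $\varphi_{j+1}$ share a large common direct summand $\psi_j$, and it is \emph{this} summand that plays the role of $\xi$ in Corollary~\ref{stable-uniq-Q} (with the complementary piece having trace $<1/n$). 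Without that common-summand mechanism there is no $\xi$ available and Corollary~\ref{stable-uniq-Q} does not apply. Your remark that one can take $N$ so that the $[\phi_i(p)]_0$ are equal integers is correct in spirit but insufficient: equal $\Kzero$-data alone, with no buffer map $\xi$ of large trace, does not give approximate unitary equivalence.
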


\begin{proof}
If $\tr(A)=\O$, then the conclusion holds trivially (with $I=\{0\}$). Otherwise, assuming, as we may, that $A\cong A\otimes Q$, we have $\Kzero(A)\cong\Ratn$ (as order-unit groups).

Apply Corollary \ref{stable-uniq-Q} to $A$ with respect to $({\mathcal F}\cdot{\mathcal F}, \eps/4)$ to obtain $n$ and $(\mathcal P, \mathcal G, \delta)$. Since $\Kzero(A)=\Ratn$ (unique unital identification), we may suppose that $\mathcal P=\{1_A\}$.

By Theorem 3.9 of \cite{Thomsen-AI-Tr}, there is a simple unital inductive limit $C=\varinjlim(C_i, \iota_i)$ such that $\Kzero(C)=\Ratn$ (unital identification), $C_i=\MC{k_i}{[0, 1]}$, the maps $\iota_i$ are injective, and there is an isomorphism
$$\Xi: \aff(\tr(A))\cong\aff(\tr(C)).$$ 

By Lemma \ref{FDF}, there is an approximate factorization, by means of unital positive maps, 
\begin{displaymath}
\xymatrix{
\aff(\tr(A)) \ar[r]^-{\varrho} & \Real^m \ar[r]^-{\theta} & \aff(\tr(A)),
}
\end{displaymath} 
such that
$$\|{\theta(\varrho(\hat{f}))-\hat{f}}\|_\infty<\eps/16,\quad f\in\mathcal F.$$
Therefore, by Lemma \ref{app-lift}, after discarding finitely many terms of the sequence $(C_i, \iota_i)$, there is a unital positive linear map 
\begin{displaymath}
\xymatrix{
\gamma: \aff(\tr(A))\ar[r]^-\varrho & \Real^m \ar[r] & \aff(\tr(C_1))
}
\end{displaymath}
such that 
\begin{equation}\label{tr-lift}
\|{(\iota_{1, \infty})_*(\gamma(\hat{f}))-\Xi(\hat{f})}\|_\infty<\eps/8,\quad f\in\mathcal F.
\end{equation}

Denote by $\gamma^*: \tr(C_1)\to\tr(A)$ the affine map induced by $\gamma$ on tracial simplices. Since $\gamma$ factors through $\Real^m$ (so that $\gamma^*$ factors through a finite dimensional simplex), there are $\tau_1, ..., \tau_m \in \tr(A)$ and continuous functions $c_1, c_2, ..., c_m: [0, 1] \to [0, 1]$ such that
\begin{equation}\label{tr-span}
\gamma^*(\tau_t)=c_1(t)\tau_1+c_2(t)\tau_2+\cdots+c_m(t)\tau_m,\quad t\in[0, 1],
\end{equation}
and
$$c_1(t)+c_2(t)+\cdots+c_m(t)=1,\quad t\in[0, 1],$$
where $\tau_t\in\tr(C_1)$ is determined by the Dirac measure concentrated at $t\in[0, 1]$.

Since $\tau_1, \tau_2, ..., \tau_m\in\tr_{\mathrm{qd}}(A)$, there are unital completely positive linear maps $\phi_k: A\to Q$, $k=1, 2, ..., m$, such that 
each $\phi_k$ is $\mathcal G$-$\delta$-multiplicative, and
\begin{equation}\label{QD-tr}
\abs{\mathrm{tr}(\phi_k(f))-\tau_k(f)}<\eps/16m,\quad f\in \mathcal F. 
\end{equation}

For each $t\in[0, 1]$, there is a open neighbourhood $U$ such that for any $s\in U$, one has
$$\abs{c_k(s)-c_k(t)}<1/4mn.$$
(Recall that $n$ is the constant from Corollary \ref{stable-uniq-Q}, as in the second paragraph of the proof.)
Since $[0, 1]$ is compact, there is a partition
$0=t_0<t_1<\cdots<t_{l-1} < t_l=1$
such that 
\begin{equation}\label{dense-cut-1}
\abs{c_k(s)-c_k(t_j)}< 1/4mn,\quad s\in [t_{j-1}, t_{j}].
\end{equation}
Moreover, we may assume that this partition is fine enough that
\begin{equation}\label{sm-vr-tr}
|{\gamma(\hat{f})(\tau_t)-\gamma(\hat{f})(\tau_{t_j})}|<\eps/8,\quad f\in\mathcal F,\ t\in[t_{j-1}, t_j].
\end{equation}

For each $j=0, 1, ..., l$, 
%
pick rational numbers $r_{j,1}, r_{j, 2}, ..., r_{j,m}\in[0, 1]$ such that 
$$r_{j, 1}+\cdots+r_{j, m}=1$$ 
and
\begin{equation}\label{pert-coef-ratn}
\abs{r_{j, k}-c_k(t_j)}<\min\{\eps/16m, 1/4mn\},\quad k=1, ..., m.
\end{equation}
Write $r_{j, k}=q_{j, k}/p$ where $q_{j, k}, p\in\mathbb N$, and then define
$$\varphi_j:=(\underbrace{\phi_1\oplus\cdots\oplus\phi_1}_{q_{j, 1}})\oplus\cdots\oplus (\underbrace{\phi_m\oplus\cdots\oplus\phi_m}_{q_{j, m}}): A\to Q.$$
Note that it follows from \eqref{tr-span}, \eqref{QD-tr}, and \eqref{pert-coef-ratn} that
\begin{equation}\label{lift-pt}
\abs{\mathrm{tr}(\varphi_j(f))-\gamma^*(\tau_{t_j})(f)}< \eps/4,\quad  f\in\mathcal F.
\end{equation}


By \eqref{pert-coef-ratn}, \eqref{dense-cut-1}, one has that
\begin{equation}\label{ratn-coef-close}
\frac{\abs{q_{j, k} - q_{j+1, k}}}{p}< \frac{1}{mn},\quad k=1, ..., m,\ j=0, ..., l-1.
\end{equation}

For each $j=0, ..., l-1$, compare the direct sum maps
$$\varphi_j=(\underbrace{\phi_1\oplus\cdots\oplus\phi_1}_{q_{j, 1}})\oplus\cdots\oplus (\underbrace{\phi_m\oplus\cdots\oplus\phi_m}_{q_{j, m}})  $$
and
$$\varphi_{j+1}=(\underbrace{\phi_1\oplus\cdots\oplus\phi_1}_{q_{j+1, 1}})\oplus\cdots\oplus (\underbrace{\phi_m\oplus\cdots\oplus\phi_m}_{q_{j+1, m}}),$$
and consider the common direct summand of these two maps,
$$\psi_{j}:=(\underbrace{\phi_1\oplus\cdots\oplus\phi_1}_{\min\{q_{j, 1}, q_{j+1, 1}\}})\oplus\cdots\oplus (\underbrace{\phi_m\oplus\cdots\oplus\phi_m}_{\min\{q_{j, m}, q_{j+1, m}\}}).$$

By \eqref{ratn-coef-close}, one has
$$\abs{\mathrm{tr}(1-\psi_j(1))} = \frac{1}{p}\sum_{k=1}^m\abs{q_{j, k} - q_{j+1, k}}<\frac{1}{n}.$$
On the other hand, since $\varphi_j$ and $\varphi_{j+1}$ are unital, 
one has 
$$[(\varphi_j\ominus\psi_j)(1_A)]_0=1-\mathrm{tr}(\psi_j(1_A))=[(\varphi_{j+1}\ominus\psi_j)(1_A)]_0.$$
Recall that $\mathcal P=\{1_A\}$.
By the conclusion of Corollary \ref{stable-uniq-Q} 
there is a unitary $u_{j+1}$ such that
$$\norm{\varphi_j(f)-u_{j+1}^*\varphi_{j+1}(f)u_{j+1}}<\eps/4,\quad f\in\mathcal F\cdot\mathcal F,\  0\leq j\leq l-1.$$
Define $v_0=1$, and set
$$u_ju_{j-1} \cdots u_1=v_j,\quad j=1, ..., l.$$
Then, for any $0\leq j\leq l-1$ and any $f\in\mathcal F\cdot\mathcal F$, one has
\begin{eqnarray*}
&&\norm{\mathrm{Ad}(v_j)\circ\varphi_i(f)-\mathrm{Ad}(v_{j+1})\circ\varphi_{j+1}(f)}\\
&=& \norm{(u_j \cdots u_1)^*\varphi_i(f)(u_j \cdots u_1)-(u_{j+1} \cdots u_1)^*\varphi_{j+1}(f)(u_{j+1} \cdots u_1)}\\
&=& \norm{\varphi_j(f)-u_{j+1}^*\varphi_{j+1}(f)u_{j+1}}<\eps/4.
\end{eqnarray*}
Replacing each homomorphism $\varphi_j$ by $\mathrm{Ad}(v_j)\circ\varphi_j$ for $j=1, ..., l$, and still denoting it by $\varphi_j$, one has
\begin{equation}\label{close-nb}
\norm{\varphi_j(f)-\varphi_{j+1}(f)}<\eps/4,\quad f\in\mathcal F\cdot\mathcal F,\ 0\leq j\leq l-1.
\end{equation}
Define a unital completely positive linear map $\phi: A\to C_1$ by
$$\phi(f)(t):=\frac{t_{j+1}-t}{t_{j+1}-t_j}\varphi_j(f) + \frac{t-t_j}{t_{j+1}-t_j}\varphi_{j+1}(f),\quad \textrm{if $t\in[t_j, t_{j+1}]$}.$$
Then, by \eqref{close-nb}, the map $\phi$ is $\mathcal F$-$\eps$-multiplicative. By \eqref{lift-pt} and \eqref{sm-vr-tr}, one has
\begin{equation}\label{tr-est-AC}
\|{\phi_*(\hat{f}) - \gamma(\hat{f})}\|_\infty < \eps/2,\quad f\in\mathcal F.
\end{equation}

Note that $A$ and $C$ have cancellation for projections, and also $\Kzero^+(A)=\Kzero^+(C)=\Ratn^+$ (unital identification) and $\aff(\tr(A))\cong\aff(\tr(C))$. By Theorem 4.4 and Corollary 6.8 of \cite{ESR-Cuntz} (see also Theorem 2.6 of \cite{Brn-Toms-3app} and Theorem 5.5 of \cite{BPT-Cuntz}, expressed in terms of W instead of Cu), it follows that the Cuntz semigroup of $A$ and the Cuntz semigroup of $C$ are isomorphic. Applied to the canonical unital map $\mathrm{Cu}(C_1)\to\mathrm{Cu}(C)\cong\mathrm{Cu}(A)$, Theorem 1 of \cite{Robert-Cu} implies that there is a unital homomorphism $\psi: C_1\to A$ giving rise to this map, and in particular such that 
\begin{equation}\label{tr-est-CA}
\psi_*=\Xi^{-1}\circ(\iota_{1, \infty})_*\quad\textrm{on $\aff(\tr(C_1))$}.
\end{equation}
Since the ideal of $\mathrm{Cu}(C_1)$ killed by the map $\mathrm{Cu}(C_1)\to\mathrm{Cu}(C)\cong\mathrm{Cu}(A)$ is zero, as the map $C_1\to C$ is an embedding, it follows that the map $C_1\to A$ is also an embedding. 
By \eqref{tr-est-AC}, \eqref{tr-est-CA}, and \eqref{tr-lift}, one then has 
$$\|\phi_*\circ\psi_*(\hat{f})-\hat{f}\|_\infty<\eps,\quad f\in\mathcal F,$$
as desired.
\end{proof}

Recall that 
\begin{defn}[\cite{LinTAF1}, \cite{EN-TApprox}]\label{defn-TAI}
Let $\mathcal S$ be a class of unital C*-algebras. A C*-algebra $A$ is said to be tracially approximated by the C*-algebras in $\mathcal S$, and one writes $A\in \mathrm{TA}\mathcal S$, if the following condition holds: For any finite set $\mathcal F\subseteq A$, any $\eps>0$, and any non-zero $a\in A^+$, there is a non-zero sub-C*-algebra $S\subseteq A$ such that $S\in\mathcal S$,  and if $p=1_S$, then
\begin{enumerate}
\item $\norm{pf-fp}<\eps$, $f\in\mathcal F$,
\item $pfp\in_\eps S$, $f\in\mathcal F$, and
\item $1-p$ is Murray-von Neumann equivalent to a subprojection of $\overline{aAa}$.
\end{enumerate}

Denote by $\mathcal I$ the class of interval algebras, i.e., $$\mathcal I=\{\mathrm{C}([0, 1])\otimes F: \textrm{$F$ is a finite dimensional C*-algebra}\}.$$ $\mathrm{TA}\mathcal I$, then, is the class of C*-algebras which can be tracially approximated by interval algebras.
\end{defn}

For $\mathrm{TA}\mathcal I$ algebras, based on Winter's deformation technique (\cite{Winter-Z} and \cite{Lin-App}) and on \cite{L-N},  one has the following classification theorem.
\begin{thm}[Corollary 11.9 of \cite{Lnclasn}]\label{clasnTAI}
Let $A, B$ be unital separable amenable simple C*-algebras satisfying the UCT. Assume that $A$, $B$ are Jiang-Su stable, and assume that $A\otimes Q\in\mathrm{TA}\mathcal I$ and $B\otimes Q\in \mathrm{TA}\mathcal I$. Then $A\cong B$ if and only if $\mathrm{Ell}(A)\cong\mathrm{Ell}(B)$.
\end{thm}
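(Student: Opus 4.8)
Since Theorem~\ref{clasnTAI} is quoted from \cite{Lnclasn}, let me indicate how its proof goes. The forward implication is immediate from functoriality of the Elliott invariant, so the content is the converse, and the plan is to split it into a ``rational'' classification and a deformation step, following the scheme of Lin and of Winter indicated above. Fix relatively prime supernatural numbers $\mathfrak p,\mathfrak q$ of infinite type with $M_{\mathfrak p}\otimes M_{\mathfrak q}\cong Q$, so that $\mathcal Z\cong\mathcal Z_{\mathfrak p,\mathfrak q}$; the first step is to prove the theorem for algebras that are already stable under tensoring with $Q$, with $M_{\mathfrak p}$, or with $M_{\mathfrak q}$, noting that each of these becomes $\mathrm{TA}\mathcal I$ after tensoring with $Q$, and then to bootstrap from this to the general $\mathcal Z$-stable case.

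For a pair $D_1,D_2$ of the first kind, the claim that any unital isomorphism $\mathrm{Ell}(D_1)\cong\mathrm{Ell}(D_2)$ is induced by a $*$-isomorphism would follow from Elliott's two-sided approximate intertwining once one has two ingredients. The \emph{existence} theorem: on every $(\mathcal F,\eps)$ the given isomorphism of invariants is realized by a unital $\delta$-multiplicative completely positive map $D_1\to D_2$ --- obtained by lifting it to a $\mathrm{KL}$-class via the UCT, correcting it to be compatible with traces using the divisibility of $\Kzero(D_2)$ and the density of affine functions coming from projections, and passing to an honest approximate $*$-homomorphism using semiprojectivity of interval algebras and the $\mathrm{TA}\mathcal I$ structure of $D_2\otimes Q$. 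The \emph{uniqueness} theorem: two unital $\delta$-multiplicative maps $D_1\to D_2$ agreeing on $\underline{\mathrm K}$ and on all traces are approximately unitarily equivalent --- proven, given $(\mathcal F,\eps)$, by cutting $D_2$ by a projection $p$ with $pD_2p$ within $\eps$ of an interval subalgebra (tensored with a UHF algebra in the $M_{\mathfrak p}$- and $M_{\mathfrak q}$-stable cases) and with $1-p$ of arbitrarily small trace, applying the stable uniqueness result (Theorem~\ref{stable-uniq}, in the form of Corollary~\ref{stable-uniq-Q} for the $Q$-like target corner) to the parts of the two maps landing in that subalgebra with the large summand playing the role of the ``$n\cdot\xi$'' term, and absorbing the complementary corner $(1-p)D_2(1-p)$ by strict comparison, which holds because $D_2$, absorbing a UHF algebra or $Q$, is $\mathcal Z$-stable.

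For the general case I would use Winter's deformation. Since $A\cong A\otimes\mathcal Z\cong A\otimes\mathcal Z_{\mathfrak p,\mathfrak q}$, one has
$$A\;\cong\;\{\,f\in C([0,1],A\otimes Q):f(0)\in A\otimes M_{\mathfrak p},\ f(1)\in A\otimes M_{\mathfrak q}\,\},$$
with $A\otimes M_{\mathfrak p}\hookrightarrow A\otimes Q$ the corner embedding $a\otimes x\mapsto a\otimes x\otimes 1$, and likewise for $B$. An isomorphism $\alpha\colon\mathrm{Ell}(A)\to\mathrm{Ell}(B)$ induces a compatible triple $\alpha_{\mathfrak p},\alpha_{\mathfrak q},\alpha_Q$ of isomorphisms of the invariants of $A\otimes M_{\mathfrak p}$, $A\otimes M_{\mathfrak q}$, $A\otimes Q$ with those of the corresponding algebras of $B$ (localize $\mathrm K_*$; the trace simplex and the pairing are unchanged under tensoring with a UHF algebra), and by the first step these are realized by isomorphisms $\Phi_{\mathfrak p},\Phi_{\mathfrak q},\Phi_Q$. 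Since $\Phi_Q$ composed with the corner embedding of $A\otimes M_{\mathfrak p}$ and the corner embedding of $B\otimes M_{\mathfrak p}$ composed with $\Phi_{\mathfrak p}$ induce the same morphism on $\mathrm{Ell}$, the uniqueness theorem makes them approximately --- and then, after killing a $\mathrm K_1$-type obstruction, asymptotically --- unitarily equivalent, and likewise at the $\mathfrak q$ endpoint; a one-sided approximate intertwining then deforms $\Phi_Q$ through a continuous path of unitaries so as to patch the two mapping-torus pictures, yielding $A\cong B$, and a bookkeeping check shows the resulting isomorphism induces $\alpha$. I expect the main obstacle to be the uniqueness theorem: genuinely extracting approximate unitary equivalence of maps between $\mathrm{TA}\mathcal I$ algebras from $\underline{\mathrm K}$- and trace-agreement, which forces one to control the complementary corner $1-p$ and to combine the stable uniqueness theorem, strict comparison, and the absorption properties of interval algebras in a single argument; the closely related difficulty is the upgrade from approximate to asymptotic unitary equivalence of the boundary maps required for the gluing, which is the heart of Winter's technique.
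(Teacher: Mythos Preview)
The paper does not give a proof of this statement: Theorem~\ref{clasnTAI} is simply quoted as Corollary~11.9 of \cite{Lnclasn}, with the surrounding text pointing to Winter's deformation technique (\cite{Winter-Z}, \cite{Lin-App}) and to \cite{L-N} as the ingredients. So there is nothing in the present paper to compare your argument against line by line.

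That said, your outline is an accurate summary of the strategy actually carried out in those references. The two-layer architecture you describe --- first a full existence/uniqueness package for the UHF-stable (hence rationally $\mathrm{TA}\mathcal I$) algebras $A\otimes M_{\mathfrak p}$, $A\otimes M_{\mathfrak q}$, $A\otimes Q$, then Winter's localization via the $\mathcal Z_{\mathfrak p,\mathfrak q}$ picture to glue the endpoint isomorphisms into one on $A\otimes\mathcal Z$ --- is exactly the scheme of \cite{Winter-Z}, \cite{Lin-App}, \cite{Lnclasn}. Your identification of the hard point (upgrading approximate to asymptotic unitary equivalence at the fibres so that the mapping-torus gluing goes through) is also on target; this is where \cite{L-N} enters. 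One small caution: invoking Corollary~\ref{stable-uniq-Q} from the present paper inside the uniqueness step is heuristic rather than literal --- that corollary is tailored to maps into $Q$ and is used here only in the proof of Theorem~\ref{tr-factorization}; the uniqueness theorem in \cite{Lnclasn} needs a more general stable uniqueness statement together with control of the rotation/determinant map, not just $\underline{\mathrm K}$- and trace-agreement. But as a sketch of the route, your proposal is faithful to the cited proof.
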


The following is the main result of this note, which asserts that certain abstract C*-algebras are covered by the classification theorem above.

\begin{thm}\label{main-cor}
Let $A$ be a separable simple unital C*-algebra satisfying the UCT. Assume that $A\otimes Q$ has finite nuclear dimension, $\tr(A)=\tr_{\mathrm{qd}}(A)$, and $\Kzero(A)\otimes\Ratn = \Ratn$ (identification of order-unit groups). Then $A\otimes Q\in\mathrm{TA}\mathcal I$. 
\end{thm}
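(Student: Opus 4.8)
The plan is to deduce this from the trace-level factorization of Theorem~\ref{tr-factorization} by means of the property-(SI) machinery of Matui and Sato \cite{Matui-Sato-DR}, in the same way that they prove $A\otimes\mathrm{UHF}$ is TAF, but with interval algebras replacing finite-dimensional ones and with no restriction on the trace simplex.

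First I would record the reductions. Put $B:=A\otimes Q$; since $Q\otimes Q\cong Q$ one has $B\cong B\otimes Q$, so $B$ is $Q$-stable, hence $\mathcal Z$-stable, hence has strict comparison of positive elements and cancellation of projections. Also $B$ is simple, separable, unital and nuclear (finite nuclear dimension), satisfies the UCT, has $\Kzero(B)\cong\Kzero(A)\otimes\Ratn\cong\Ratn$ of rank one, and $\tr(B)=\tr_{\mathrm{qd}}(B)$ (quasidiagonality of traces is inherited by $A\otimes Q$); finite nuclear dimension also gives, via \cite{Matui-Sato-DR}, property (SI) for the central sequence algebra $B_\infty\cap B'$. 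Now fix a finite set $\mathcal F\subseteq B$, $\eps>0$ and a nonzero $a\in B^+$, and set $\eta:=\inf_{\tau\in\tr(B)}d_\tau(a)$, which is strictly positive since $B$ is simple and $\tau\mapsto d_\tau(a)$ is lower semicontinuous on the compact set $\tr(B)$. By strict comparison and cancellation it suffices to produce an interval subalgebra $S\subseteq B$ with $\norm{1_S f-f 1_S}<\eps$ and $1_S f 1_S\in_\eps S$ for $f\in\mathcal F$, and with $\tau(1-1_S)<\eta$ for all $\tau$: indeed the projection $1-1_S$ is then Cuntz-subequivalent to $a$, hence Murray--von Neumann equivalent to a subprojection of $\overline{aBa}$.

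To build $S$ I would first apply Theorem~\ref{tr-factorization} (to an enlargement of $\mathcal F$ and a sufficiently small tolerance) to obtain an interval algebra $I$, an approximately multiplicative unital completely positive map $\phi\colon B\to I$, and a unital embedding $\psi\colon I\to B$ such that $\psi\circ\phi$ is as close as we like to $\mathrm{id}_B$ on every trace of $B$. This is a purely tracial statement, so the remaining work is to convert it into the norm-level data above. Following Matui--Sato, I would pass to the finite von Neumann completions---where, the traces being quasidiagonal and $B$ having finite nuclear dimension, the pair $(\phi,\psi)$ witnesses a $2$-norm approximation of $\mathrm{id}_B$ by $I$ in each trace---and then use property (SI), together with an approximately central supply of mutually orthogonal equivalent projections coming from $\mathcal Z$-stability, to transfer this to a norm-level statement: one gets an approximately central projection $p=1_S$ with $\tau(1-p)<\eta$ uniformly, together with a copy of a corner of $I$ inside $pBp$ approximately containing the compressions $pfp$. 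The stable uniqueness theorem, Corollary~\ref{stable-uniq-Q}, is what makes the local pieces of this construction fit together consistently; here the rank-one hypothesis on $\Kzero(A)$ is essential, since it reduces the $\underline{\mathrm K}$-data to be matched to the class of the unit alone, on which all maps in sight agree. The resulting interval subalgebra $S$ then has the required properties.

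The main obstacle is precisely this conversion from ``close on all traces'' to ``close in norm on the compressions $pfp$''. Theorem~\ref{tr-factorization} controls only traces, and it is the two remaining structural hypotheses that bridge the gap: finite nuclear dimension, which via Matui--Sato's property (SI) supplies the small-isometry argument that both absorbs the defect and produces the Cuntz-small complement $1-p$; and the rank-one condition on $\Kzero(A)$, which is what keeps the uniqueness input of Corollary~\ref{stable-uniq-Q} free of $K$-theoretic obstructions. Quasidiagonality of the traces, already exploited inside Theorem~\ref{tr-factorization}, reappears as the reason the needed approximation in the finite von Neumann completions exists at all.
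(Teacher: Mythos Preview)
The paper's own proof is a single sentence: Theorem~\ref{tr-factorization} supplies exactly the tracial factorization hypothesis of Theorem~2.2 of \cite{Winter-TA}, and that theorem (which packages the Matui--Sato property~(SI) machinery in the generality of an arbitrary trace simplex and an arbitrary target class $\mathcal S$) converts this directly into $A\otimes Q\in\mathrm{TA}\mathcal I$. Your proposal is, in effect, an attempt to reprove Winter's Theorem~2.2 inline rather than cite it. The ingredients you list---$\mathcal Z$-stability, strict comparison, property~(SI), approximately central projections---are indeed the ones Winter uses, so the strategy is sound, but what you have written is a sketch of the shape of that argument rather than a proof; the passage ``pass to the finite von Neumann completions\ldots use property~(SI)\ldots transfer this to a norm-level statement'' hides all of the actual work.

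Two specific points deserve correction. First, the tracial condition $|\tau(\psi\phi(a)-a)|<\eps$ from Theorem~\ref{tr-factorization} does \emph{not} by itself give $\|\psi\phi(a)-a\|_{2,\tau}$ small: you only control traces of first-order expressions in $\mathcal F$, not of $(\psi\phi(a)-a)^*(\psi\phi(a)-a)$, and approximate multiplicativity of $\phi$ does not close this gap. Winter's argument does not proceed via such a $2$-norm estimate; the mechanism is different (one uses $\mathcal Z$-stability to produce an approximately central partition of unity and places conjugates of $\psi(I)$ under the pieces). Second, you invoke Corollary~\ref{stable-uniq-Q} and the rank-one hypothesis on $\Kzero(A)$ a second time, in the SI/conversion step. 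They are not needed there: the rank-one condition and the stable uniqueness theorem are consumed entirely inside the proof of Theorem~\ref{tr-factorization} (to glue the pointwise maps $\varphi_j$ into a continuous map into $C_1$). The conversion from ``tracial factorization through $\mathcal I$'' to ``$\mathrm{TA}\mathcal I$'' is a general statement requiring only finite nuclear dimension, with no $K$-theory input at all.
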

\begin{proof}
This follows from Theorem \ref{tr-factorization} above and Theorem 2.2 of \cite{Winter-TA} directly.
\end{proof}

\begin{proof}[Proof of Theorem \ref{main-thm}]
By Proposition 8.5 of \cite{BBSTWW},  as $A\otimes Q$ has finite decomposition rank, $\tr(A\otimes Q)=\tr_{\mathrm{qd}}(A\otimes Q)$. Furthermore, by \cite{KW-DR-QD}, $A\otimes Q$ is stably finite and nuclear and so by \cite{BlaTrace} and \cite{Haagtrace}, $\tr(A)\neq\O$. Then $\Kzero(A\otimes Q)=\Ratn$ (as order-unit groups), and the statement follows from Theorem \ref{main-cor}. (The classifiability of $A\otimes\mathcal Z$ holds by Theorem \ref{clasnTAI}.)
\end{proof}

\bibliographystyle{plainurl}

\end{document}